\theoremstyle{plain}
\newtheorem{thm}{Theorem}
\newtheorem{lem}{Lemma}
\theoremstyle{remark}
\newtheorem{rem}{Remark}
\DeclareMathOperator{\td}{d\mspace{-2mu}}
\date{Commenced on 21 December 2008 and completed on 24 December 2008 in Melbourne}
\date{}
\begin{document}

\title{Refinements of lower bounds for polygamma functions}

\author[F. Qi]{Feng Qi}
\address[F. Qi]{Research Institute of Mathematical Inequality Theory, Henan Polytechnic University, Jiaozuo City, Henan Province, 454010, China}
\email{\href{mailto: F. Qi <qifeng618@gmail.com>}{qifeng618@gmail.com}, \href{mailto: F. Qi <qifeng618@hotmail.com>}{qifeng618@hotmail.com}, \href{mailto: F. Qi <qifeng618@qq.com>}{qifeng618@qq.com}}
\urladdr{\url{http://qifeng618.spaces.live.com}}

\author[B.-N. Guo]{Bai-Ni Guo}
\address[B.-N. Guo]{School of Mathematics and Informatics, Henan Polytechnic University, Jiaozuo City, Henan Province, 454010, China}
\email{\href{mailto: B.-N. Guo <bai.ni.guo@gmail.com>}{bai.ni.guo@gmail.com}, \href{mailto: B.-N. Guo <bai.ni.guo@hotmail.com>}{bai.ni.guo@hotmail.com}}
\urladdr{\url{http://guobaini.spaces.live.com}}

\begin{abstract}
In the paper, some lower bounds for polygamma functions are refined.
\end{abstract}

\keywords{refinement, lower bound, polygamma function, inequality}

\subjclass[2000]{Primary 33B15; Secondary 26D07}

\thanks{The first author was partially supported by the China Scholarship Council}

\thanks{This paper was typeset using \AmS-\LaTeX}

\maketitle

\section{Introduction and main results}
It is well-known that the classical Euler's gamma function
\begin{equation}\label{gamma-dfn}
\Gamma(x)=\int^\infty_0t^{x-1} e^{-t}\td t
\end{equation}
for $x>0$, the psi function $\psi(x)=\frac{\Gamma'(x)}{\Gamma(x)}$ and the polygamma functions $\psi^{(i)}(x)$ for $i\in\mathbb{N}$ are a series of important special functions and have much extensive applications in many branches such as statistics, probability, number theory, theory of $0$-$1$ matrices, graph theory, combinatorics, physics, engineering, and other mathematical sciences.
\par
In~\cite[Corollary~2]{egp}, the inequality
\begin{equation}\label{batir-alzer-ineq}
\psi'(x)e^{\psi(x)}<1
\end{equation}
for $x>0$ was deduced.
\par
In~\cite[Lemma~1.1]{batir-new-jipam} and~\cite[Lemma~1.1]{batir-new-rgmia}, the inequality~\eqref{batir-alzer-ineq} was recovered.
\par
In~\cite[Theorem~4.8]{forum-alzer}, by the aid of the inequality
\begin{equation}\label{positivity}
[\psi'(x)]^2+\psi''(x)>0
\end{equation}
for $x>0$, the inequality~\eqref{batir-alzer-ineq} was generalized as
\begin{equation}\label{alzer-forum-thm4.8}
(n-1)!\exp(-n\psi(x+1))<\vert\psi^{(n)}(x)\vert<(n-1)!\exp(-n\psi(x))
\end{equation}
for $x>0$ and $n\in\mathbb{N}$, which can be rearranged as
\begin{equation}\label{alzer-ineq-rew}
e^{-\psi(x+1)} <\sqrt[n]{\frac{\bigl\vert\psi^{(n)}(x)\bigr\vert }{(n-1)!}}\, <e^{-\psi(x)}
\end{equation}
for $x>0$ and $n\in\mathbb{N}$.
\par
In~\cite[Theorem~2.1]{batir-jmaa-06-05-065}, the left hand-side inequality in~\eqref{alzer-forum-thm4.8} was refined as
\begin{equation}\label{batir-thm2.1-ineq}
(n-1)!\exp\biggl(-n\psi\biggl(x+\frac12\biggr)\biggr) <\bigl\vert\psi^{(n)}(x)\bigr\vert  <(n-1)!\exp(-n\psi(x))
\end{equation}
which can be rearranged as
\begin{equation}\label{batir-thm2.1-ineq-rew}
e^{-\psi(x+1/2)} <\sqrt[n]{\frac{\bigl\vert\psi^{(n)}(x)\bigr\vert }{(n-1)!}}\, <e^{-\psi(x)}
\end{equation}
for $x>0$ and $n\in\mathbb{N}$.
\par
Furthermore, the function $\psi^{(n)}(x)$ was bounded in \cite[Theorem~2.2]{batir-jmaa-06-05-065} alternatively as
\begin{equation}\label{gen-2}
(n-1)!\biggl[\frac{\psi^{(k)}(x+1/2)}{(-1)^{k-1}(k-1)!}\biggr]^{n/k}
<\bigl\vert\psi^{(n)}(x)\bigr\vert  <(n-1)!\biggl[\frac{\psi^{(k)}(x)}{(-1)^{k-1}(k-1)!}\biggr]^{n/k}
\end{equation}
which can be rewritten as
\begin{equation}\label{gen-2-1}
\sqrt[k]{\frac{\bigl|\psi^{(k)}(x+1/2)\bigr|}{(k-1)!}}\, <\sqrt[n]{\frac{\bigl\vert\psi^{(n)}(x)\bigr\vert }{(n-1)!}}\, <\sqrt[k]{\frac{\bigl|\psi^{(k)}(x)\bigr|}{(k-1)!}}
\end{equation}
for $x>0$ and $1\le k\le n-1$.
\par
The main aim of this paper is to further refine the left-hand side inequalities in~\eqref{batir-thm2.1-ineq} and~\eqref{gen-2} or \eqref{batir-thm2.1-ineq-rew} and~\eqref{gen-2-1} as follows.

\begin{thm}\label{single-di-tri-thm}
For $1\le n\le2$, the inequality
\begin{equation}\label{single-di-tri-thm-ineq1}
\sqrt[n]{\frac{\bigl\vert\psi^{(n)}(x)\bigr\vert }{(n-1)!}}\, >e^{-\psi(1/\ln(1+1/x))}
\end{equation}
holds on $(0,\infty)$. For $n\in\mathbb{N}$ and $1\le k\le n-1$, the inequality
\begin{equation}\label{single-di-tri-thm-ineq2}
\sqrt[n]{\frac{\bigl\vert\psi^{(n)}(x)\bigr\vert }{(n-1)!}}\, >\sqrt[k]{\frac{\bigl\vert\psi^{(k)}(1/\ln(1+1/x))\bigr\vert}{(k-1)!}}\,
\end{equation}
is valid on $(0,\infty)$.
\end{thm}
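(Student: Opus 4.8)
The plan is to prove both inequalities by one and the same monotonicity principle: for each of them I exhibit an auxiliary function on $(0,\infty)$ that is strictly decreasing with limit $1$ as $x\to\infty$; since (because of the poles of the polygamma functions at the origin) that function tends to $+\infty$ as $x\to0^{+}$, it must exceed $1$ on the whole half-line, which is exactly the asserted inequality. Concretely, for \eqref{single-di-tri-thm-ineq1} write $t=t(x)=\bigl[\ln(1+1/x)\bigr]^{-1}$, record the elementary identity $\td t/\td x=\bigl[x(x+1)\bigr]^{-1}t^{2}$, and set, for $n\in\{1,2\}$,
\[
g_{n}(x)=\sqrt[n]{\frac{\bigl|\psi^{(n)}(x)\bigr|}{(n-1)!}}\;\exp\psi\bigl(t(x)\bigr).
\]
Using $\bigl|\psi^{(n)}(x)\bigr|=(-1)^{n+1}\psi^{(n)}(x)$, differentiation of $\ln g_{n}$ shows that $g_{n}$ is decreasing on $(0,\infty)$ if and only if
\[
\frac{\bigl|\psi^{(n+1)}(x)\bigr|}{\bigl|\psi^{(n)}(x)\bigr|}\;>\;\frac{n\,\psi'\bigl(t(x)\bigr)}{x(x+1)\ln^{2}(1+1/x)}\qquad(x>0),
\]
while the classical asymptotics $\psi(x)=\ln x-\tfrac1{2x}+O(x^{-2})$, $\bigl|\psi^{(n)}(x)\bigr|\sim(n-1)!\,x^{-n}$ and $t(x)=x+\tfrac12+O(x^{-1})$ give $g_{n}(x)\to1$ as $x\to\infty$. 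Thus \eqref{single-di-tri-thm-ineq1} is reduced to the displayed differential inequality for $n=1,2$.

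For \eqref{single-di-tri-thm-ineq2} I proceed in exactly the same way with
\[
G_{n,k}(x)=\sqrt[n]{\frac{\bigl|\psi^{(n)}(x)\bigr|}{(n-1)!}}\,\Bigg/\,\sqrt[k]{\frac{\bigl|\psi^{(k)}\bigl(t(x)\bigr)\bigr|}{(k-1)!}}\,,
\]
whose logarithmic derivative is negative precisely when
\[
\frac{1}{n}\,\frac{\bigl|\psi^{(n+1)}(x)\bigr|}{\bigl|\psi^{(n)}(x)\bigr|}\;>\;\frac{t(x)^{2}}{x(x+1)}\cdot\frac{1}{k}\,\frac{\bigl|\psi^{(k+1)}\bigl(t(x)\bigr)\bigr|}{\bigl|\psi^{(k)}\bigl(t(x)\bigr)\bigr|}\qquad(x>0,\ 1\le k\le n-1),
\]
and where again $G_{n,k}(x)\to1$ as $x\to\infty$ and $G_{n,k}(x)\to\infty$ as $x\to0^{+}$; so the task is to establish this inequality.

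The two families of differential inequalities are the real content of the theorem, and I expect them to be the main obstacle. The tools available are: (i) the integral representations $\bigl|\psi^{(m)}(x)\bigr|=\int_{0}^{\infty}\frac{s^{m}e^{-xs}}{1-e^{-s}}\,\td s$, which show that each $\bigl|\psi^{(m)}\bigr|$ is completely monotonic, hence logarithmically convex, so that $y\mapsto\bigl|\psi^{(m+1)}(y)\bigr|/\bigl|\psi^{(m)}(y)\bigr|$ is decreasing and — by the Cauchy–Schwarz inequality for the moments of $\frac{e^{-ys}}{1-e^{-s}}\,\td s$ — $m\mapsto\bigl|\psi^{(m+1)}(y)\bigr|/\bigl|\psi^{(m)}(y)\bigr|$ is increasing; (ii) sharp elementary two-sided estimates for $\psi'$, $\psi''$ and for the ratios $\bigl|\psi^{(m+1)}\bigr|/\bigl|\psi^{(m)}\bigr|$, of the kind already used to obtain \eqref{batir-alzer-ineq} and \eqref{positivity}; and (iii) the classical chain of mean inequalities
\[
\sqrt{x(x+1)}\;<\;\frac{1}{\ln(1+1/x)}\;<\;x+\frac12\qquad(x>0),
\]
i.e. $G(x,x+1)<L(x,x+1)<A(x,x+1)$ for the geometric, logarithmic and arithmetic means, which both explains why Theorem~\ref{single-di-tri-thm} refines the left-hand sides of \eqref{batir-thm2.1-ineq} and \eqref{gen-2} and supplies the control $x(x+1)<t(x)^{2}<(x+\tfrac12)^{2}$ of the factor $t(x)^{2}/[x(x+1)]$.

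The delicate point is exactly that $t(x)^{2}/[x(x+1)]>1$, so the factor produced by $\td t/\td x$ pushes the wrong way and must be defeated by the small but positive gap between $\tfrac1n\bigl|\psi^{(n+1)}(x)\bigr|/\bigl|\psi^{(n)}(x)\bigr|$ and $\tfrac1k\bigl|\psi^{(k+1)}(t)\bigr|/\bigl|\psi^{(k)}(t)\bigr|$ coming from the monotonicity properties in (i) (together with $t>x$); likewise in \eqref{single-di-tri-thm-ineq1} the near-unit factor has to be beaten by the elementary bounds on $-\psi''/\psi'$ and on $\psi'''/\psi''$. Carrying out (ii) with enough precision to win this race uniformly for all $x>0$ is where the work concentrates, and the restriction $1\le n\le2$ in the first assertion is precisely the range in which those elementary estimates stay clean enough to close the argument.
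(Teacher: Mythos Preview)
Your proposal is a plan rather than a proof: you set up the two differential inequalities
\[
\frac{\bigl|\psi^{(n+1)}(x)\bigr|}{\bigl|\psi^{(n)}(x)\bigr|}>\frac{n\,\psi'(t)}{x(x+1)\ln^{2}(1+1/x)}
\quad\text{and}\quad
\frac1n\,\frac{\bigl|\psi^{(n+1)}(x)\bigr|}{\bigl|\psi^{(n)}(x)\bigr|}>\frac{t^{2}}{x(x+1)}\cdot\frac1k\,\frac{\bigl|\psi^{(k+1)}(t)\bigr|}{\bigl|\psi^{(k)}(t)\bigr|},
\]
correctly identify the obstruction $t^{2}/[x(x+1)]>1$, and then stop. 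The tools you list do not obviously close the gap: Cauchy--Schwarz gives that $m\mapsto\bigl|\psi^{(m+1)}\bigr|/\bigl|\psi^{(m)}\bigr|$ increases, but after dividing by $m$ the quantity $R_{m}(y)=\tfrac1m\bigl|\psi^{(m+1)}(y)\bigr|/\bigl|\psi^{(m)}(y)\bigr|$ is, to two asymptotic orders, \emph{independent} of $m$ (namely $\sim 1/y+1/(2y^{2})$), so the ``small positive gap'' you hope will defeat the bad factor is genuinely delicate and you have not produced it. As it stands there is no proof of either inequality.

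The paper proceeds quite differently and, especially for \eqref{single-di-tri-thm-ineq2}, much more simply. For \eqref{single-di-tri-thm-ineq1} it substitutes $t=1/\ln(1+1/x)$, applies the elementary bounds $\psi(t)>\ln t-1/t$ and $\bigl|\psi^{(n)}(y)\bigr|>(n-1)!/y^{n}+n!/(2y^{n+1})$, and reduces everything to the single-variable inequality $(e^{u}-1)^{n}\bigl[n(e^{u}-1)+2\bigr]\ge 2u^{n}e^{nu}$, which is checked directly for $n=1,2$; no monotonicity of $g_{n}$ is needed. For \eqref{single-di-tri-thm-ineq2} the paper exploits the fact (your inequality \eqref{gen-1-1}) that $a_{m}(y)=\bigl(\bigl|\psi^{(m)}(y)\bigr|/(m-1)!\bigr)^{1/m}$ is strictly decreasing in $m$: since $a_{k}(t)\le a_{1}(t)=\psi'(t)$ and $a_{n}(x)>\lim_{m\to\infty}a_{m}(x)=e^{1/t}-1$, it suffices to prove the clean inequality $\psi'(t)<e^{1/t}-1$, which is Lemma~\ref{h(x)-com-mon-lem}. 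This sidesteps entirely the differential-inequality race you set up, and explains why the second assertion holds for \emph{all} $n\ge k+1$ without any struggle against the factor $t^{2}/[x(x+1)]$.
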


\section{Lemmas}

In order to prove Theorem~\ref{single-di-tri-thm}, the following lemmas are needed.

\begin{lem}[\cite{subadditive-qi.tex, theta-new-proof.tex, subadditive-qi-3.tex}]\label{comp-thm-1}
For $k\in\mathbb{N}$, the inequalities
\begin{equation}\label{qi-psi-ineq-1}
\ln x-\frac1x<\psi(x)<\ln x-\frac1{2x}
\end{equation}
and
\begin{equation}\label{qi-psi-ineq}
\frac{(k-1)!}{x^k}+\frac{k!}{2x^{k+1}}< (-1)^{k+1}\psi^{(k)}(x) <\frac{(k-1)!}{x^k}+\frac{k!}{x^{k+1}}
\end{equation}
are valid on $(0,\infty)$.
\end{lem}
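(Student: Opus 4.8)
The plan is to reduce both inequalities to a single elementary estimate by means of the classical integral representation
\[
(-1)^{k+1}\psi^{(k)}(x)=\int_0^\infty\frac{t^k e^{-xt}}{1-e^{-t}}\td t,\qquad k\in\mathbb{N},\ x>0,
\]
which I would obtain by differentiating the series $\psi(x)=-\gamma+\sum_{n=0}^\infty\bigl(\frac1{n+1}-\frac1{x+n}\bigr)$ term by term, writing $\frac{k!}{(x+n)^{k+1}}=\int_0^\infty t^k e^{-(x+n)t}\td t$, and summing $\sum_{n=0}^\infty e^{-nt}=\frac1{1-e^{-t}}$. Together with $\int_0^\infty t^m e^{-xt}\td t=\frac{m!}{x^{m+1}}$, this turns the whole of \eqref{qi-psi-ineq} into a matter of bounding the factor $\frac{t}{1-e^{-t}}$ pointwise.

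Accordingly, the central step is the two-sided inequality
\[
1+\frac t2<\frac{t}{1-e^{-t}}<1+t,\qquad t>0.
\]
After multiplying by $1-e^{-t}>0$, the right-hand inequality is exactly the standard $1+t<e^t$. For the left-hand inequality I would clear denominators and rearrange to the equivalent form $h(t):=\frac t2\bigl(e^t+1\bigr)-\bigl(e^t-1\bigr)>0$; since $h(0)=h'(0)=0$ and $h''(t)=\frac t2 e^t>0$ on $(0,\infty)$, successive integration shows $h>0$ there. I expect this left-hand bound to be the only genuine obstacle, and it dissolves into this short convexity argument.

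Granting the pointwise estimate, I multiply through by $t^{k-1}e^{-xt}>0$ and integrate over $(0,\infty)$: the lower integrand $t^{k-1}\bigl(1+\frac t2\bigr)e^{-xt}$ contributes $\frac{(k-1)!}{x^k}+\frac{k!}{2x^{k+1}}$ and the upper integrand $t^{k-1}(1+t)e^{-xt}$ contributes $\frac{(k-1)!}{x^k}+\frac{k!}{x^{k+1}}$, which is precisely \eqref{qi-psi-ineq}.

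It remains to deduce \eqref{qi-psi-ineq-1}. Specializing \eqref{qi-psi-ineq} to $k=1$ gives $\frac1{2x^2}<\psi'(x)-\frac1x<\frac1{x^2}$. Setting $F(x)=\psi(x)-\ln x$, so that $F'(x)=\psi'(x)-\frac1x$ and $F(x)\to0$ as $x\to\infty$ by the asymptotic expansion of $\psi$, I integrate these bounds from $x$ to $\infty$ to obtain $\frac1{2x}<-F(x)<\frac1x$, that is, $\ln x-\frac1x<\psi(x)<\ln x-\frac1{2x}$. Every step other than the convexity argument is a routine integral evaluation.
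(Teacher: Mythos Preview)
Your argument is correct. The integral representation, the two-sided pointwise bound $1+\tfrac t2<\tfrac{t}{1-e^{-t}}<1+t$ (with your convexity proof of the lower bound via $h''(t)=\tfrac t2 e^t>0$), the termwise integration against $t^{k-1}e^{-xt}$, and the recovery of \eqref{qi-psi-ineq-1} from the case $k=1$ by integrating $F'(x)=\psi'(x)-\tfrac1x$ over $[x,\infty)$ using $\psi(x)-\ln x\to0$, are all sound.

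As for comparison: the paper does not supply its own proof of this lemma but cites it from \cite{subadditive-qi.tex, theta-new-proof.tex, subadditive-qi-3.tex}. Those papers (as their titles indicate) obtain \eqref{qi-psi-ineq-1}--\eqref{qi-psi-ineq} by showing that the difference functions, such as $\ln x-\tfrac1{2x}-\psi(x)$, $\psi(x)-\ln x+\tfrac1x$, and their $k$th-derivative analogues, are completely monotonic on $(0,\infty)$; the mechanism there is to write each difference as a Laplace transform with a nonnegative density, which in the end reduces to the same pointwise inequality on $\tfrac{t}{1-e^{-t}}$ that you isolate. Your route is the same in substance but more direct: you skip the complete-monotonicity packaging and go straight to the integral inequality. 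The cited approach buys a slightly stronger statement (complete monotonicity, not just positivity), while yours is shorter and exactly tailored to what Theorem~\ref{single-di-tri-thm} actually needs.
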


Recall~\cite[Chapter~XIII]{mpf-93} and~\cite[Chapter~IV]{widder} that a function $f(x)$ is said to be completely monotonic on an interval $I\subseteq\mathbb{R}$ if $f(x)$ has derivatives of all orders on $I$ and
\begin{equation}
0\le(-1)^{k}f^{(k)}(x)<\infty
\end{equation}
holds for all $k\geq0$ on $I$. Recall also~\cite{Atanassov, minus-one} that a function $f$ is said to be logarithmically completely monotonic on an interval $I\subseteq\mathbb{R}$ if it has derivatives of all orders on $I$ and its logarithm $\ln f$ satisfies
\begin{equation}\label{lcm-dfn}
0\le(-1)^k[\ln f(x)]^{(k)}<\infty
\end{equation}
for $k\in\mathbb{N}$ on $I$. In~\cite[Theorem~4]{minus-one}, it was proved that all logarithmically completely monotonic functions are also completely monotonic, but not conversely. This result was formally published when revising~\cite{compmon2}. For more information, please refer to~\cite{CBerg, Sharp-Ineq-Polygamma.tex}.

\begin{lem}\label{h(x)-com-mon-lem}
The inequality
\begin{equation}\label{e-1-t-1}
\psi'(t)<e^{1/t}-1
\end{equation}
holds on $(0,\infty)$. More generally, the function
\begin{equation}\label{h(x)-dfn-yang-fan}
e^{1/t}-\psi'(t)
\end{equation}
is completely monotonic on $(0,\infty)$.
\end{lem}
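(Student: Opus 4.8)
The plan is to realize both $e^{1/t}$ and $\psi'(t)$ as Laplace transforms of explicit nonnegative functions on $(0,\infty)$ and to deduce the complete monotonicity of their difference from the pointwise comparison of the two densities, by the Hausdorff--Bernstein--Widder theorem; the inequality $\psi'(t)<e^{1/t}-1$ will then come out as a corollary. So the only substantive work is a single pointwise inequality between two explicit functions, and that is where I expect the main difficulty to be.

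First I would record the integral representations. From $\psi'(t)=\sum_{n\ge0}(t+n)^{-2}$ and $\frac1{(t+n)^2}=\int_0^\infty s\,e^{-(t+n)s}\td s$, interchanging sum and integral (Tonelli, everything positive) gives $\psi'(t)=\int_0^\infty\frac{s}{1-e^{-s}}\,e^{-st}\td s$. Likewise, from $e^{1/t}-1=\sum_{m\ge1}\frac1{m!\,t^m}$ together with $\frac1{t^m}=\frac1{(m-1)!}\int_0^\infty s^{m-1}e^{-st}\td s$ one gets $e^{1/t}=1+\int_0^\infty\Phi(s)\,e^{-st}\td s$, where $\Phi(s):=\sum_{k\ge0}\frac{s^k}{k!\,(k+1)!}$ (an entire function; in fact $\Phi(s)=I_1(2\sqrt s)/\sqrt s$). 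Subtracting, $e^{1/t}-\psi'(t)=1+\int_0^\infty\bigl(\Phi(s)-\tfrac{s}{1-e^{-s}}\bigr)e^{-st}\td s$. Hence, as soon as $\Phi(s)\ge\frac{s}{1-e^{-s}}$ holds on $(0,\infty)$, the function $e^{1/t}-\psi'(t)$ is the Laplace transform of the nonnegative measure $\delta_0+\bigl(\Phi(s)-\frac{s}{1-e^{-s}}\bigr)\td s$ and is therefore completely monotonic on $(0,\infty)$.

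The crux, and the main obstacle, is the pointwise inequality $\Phi(s)\ge\frac{s}{1-e^{-s}}$ for $s>0$. Writing $\frac{s}{1-e^{-s}}=s+\frac{s}{e^s-1}$ and $\Phi(s)=1+\frac s2+\Psi(s)$ with $\Psi(s):=\sum_{k\ge2}\frac{s^k}{k!(k+1)!}>0$, this is equivalent to $\Psi(s)\ge\frac s2-1+\frac{s}{e^s-1}$, and I would split at $s=2\pi$ (the radius of convergence of the Bernoulli series). For $0<s<2\pi$ the expansion $\frac{s}{e^s-1}=1-\frac s2+\sum_{k\ge1}\frac{B_{2k}}{(2k)!}s^{2k}$ turns the right-hand side into $\frac{s^2}{12}+\sum_{k\ge2}\frac{B_{2k}}{(2k)!}s^{2k}$; the tail is an alternating series whose absolute terms $\frac{|B_{2k}|}{(2k)!}s^{2k}=\frac{2\zeta(2k)}{(2\pi)^{2k}}s^{2k}$ are strictly decreasing in $k$ when $s<2\pi$ (the ratio is $\frac{\zeta(2k+2)}{\zeta(2k)}\cdot\frac{s^2}{(2\pi)^2}<1$) and whose first term $-\frac{s^4}{720}$ is negative, so the tail is negative and the right-hand side is $<\frac{s^2}{12}\le\Psi(s)$. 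For $s\ge2\pi$ I would argue crudely: $\frac{s}{e^s-1}<1$ because $e^s>1+s$, so the right-hand side is $<\frac s2$, whereas $\Psi(s)\ge\frac{s^2}{12}\ge\frac s2$ since $s\ge2\pi>6$; hence $\Psi(s)\ge\frac{s^2}{12}\ge\frac s2>\frac s2-1+\frac{s}{e^s-1}$. This yields $\Phi(s)>\frac{s}{1-e^{-s}}$ for every $s>0$, with strict inequality.

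Finally, the stated inequality falls out: by the strict form just proved, $-\frac{d}{dt}\bigl(e^{1/t}-\psi'(t)\bigr)=\int_0^\infty s\bigl(\Phi(s)-\tfrac{s}{1-e^{-s}}\bigr)e^{-st}\td s>0$, so the completely monotonic function $e^{1/t}-\psi'(t)$ is strictly decreasing; since $\lim_{t\to\infty}\bigl(e^{1/t}-\psi'(t)\bigr)=1$, it stays above $1$ on $(0,\infty)$, i.e.\ $\psi'(t)<e^{1/t}-1$. The point worth flagging is that the density inequality is tight: the two sides agree to third order at $s=0$ (the difference begins with $\frac{s^3}{144}$) and $\Phi(s)-\frac{s}{1-e^{-s}}$ does not have all nonnegative Maclaurin coefficients, so it cannot be obtained coefficientwise; splitting at $s=2\pi$ is what makes both regimes elementary.
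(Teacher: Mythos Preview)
Your argument is correct and takes a genuinely different route from the paper. The paper never passes to Laplace transforms; instead it exploits the recursion $\psi'(t)-\psi'(t+1)=t^{-2}$ to write $h(t+1)-h(t)=e^{1/(t+1)}-e^{1/t}+t^{-2}$, shows this quantity is negative by an elementary integral estimate (using only $e^x>1+x+\tfrac12 x^2$), and then telescopes $h(t)>h(t+1)>\dotsb\to1$ to obtain $\psi'(t)<e^{1/t}-1$. For the complete monotonicity the paper tries to bootstrap this to $(-1)^kh^{(k)}(t)>(-1)^kh^{(k)}(t+1)$ by asserting that successive derivatives of $e^{1/(t+1)}-e^{1/t}+t^{-2}$ alternate in sign, a step whose justification is at best sketchy (each summand being completely monotonic does not by itself determine the sign of the combination). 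Your Bernstein--Widder approach sidesteps this entirely: once the density inequality $\Phi(s)>s/(1-e^{-s})$ is in hand, complete monotonicity is immediate and fully rigorous, and the strict inequality $\psi'(t)<e^{1/t}-1$ drops out from the strict decrease of $h$ toward its limit $1$. The cost is the somewhat delicate density comparison---as you correctly flag, the Maclaurin coefficients of $\Phi(s)-s/(1-e^{-s})$ change sign, so a coefficientwise argument is unavailable---but your split at $s=2\pi$, using the alternating Bernoulli tail on one side and the crude bound $\Psi(s)\ge s^2/12\ge s/2$ on the other, handles both regimes cleanly. In short: the paper's method is more elementary and needs no integral-representation machinery, while yours is structurally cleaner and delivers the complete monotonicity without any soft spot.
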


\begin{proof}
For the sake of convenience, denote the function~\eqref{h(x)-dfn-yang-fan} by $h(x)$. It is clear that
\begin{equation}\label{infty-h(t)=1}
\lim_{t\to\infty}h(t)=1.
\end{equation}
Direct calculation reveals that
\begin{equation}\label{h(t+1)-h(t)}
h(t+1)-h(t)=e^{1/(t+1)}-e^{1/t}+\psi'(t)-\psi'(t+1)=e^{1/(t+1)}-e^{1/t}+\frac1{t^2}
\end{equation}
and
\begin{multline}\label{e1t-e1(t+1)}
e^{1/t}-e^{1/(t+1)}=\int_0^1\frac1{(t+u)^2}e^{1/(t+u)}\td u \\ >\int_0^1\frac1{(t+u)^2}\biggl[1+\frac1{t+u}+\frac1{2(t+u)^2}\biggr]\td u
>\frac{6t(t+1)^3+1}{6t^3(t+1)^3}>\frac1{t^2}
\end{multline}
for $t\in(0,\infty)$. Hence, by the limit~\eqref{infty-h(t)=1} and the mathematical induction, we have
\begin{equation}\label{h(x)>1}
h(t)>h(t+1)>h(t+2)>\dotsm>h(t+k)>\lim_{k\to\infty}h(t+k)=1,
\end{equation}
which is equivalent to the inequality~\eqref{e-1-t-1}.
\par
It is obvious that the functions $e^{1/t}$ and $e^{1/(t+1)}$ are logarithmically completely monotonic on $(0,\infty)$ and $(-1,\infty)$ respectively, so they are also completely monotonic on $(0,\infty)$ and $(-1,\infty)$ respectively. This means that
$$
(-1)^k\bigl(e^{1/t}\bigr)^{(k)}>0,\quad (-1)^k\bigl[e^{1/(t+1)}\bigr]^{(k)}>0\quad \text{and}\quad (-1)^k\biggl(\frac1{t^2}\biggr)^{(k)}>0
$$
on $(0,\infty)$ for $k\ge0$. Equivalently, the signs of the functions
$$
\bigl(e^{1/t}\bigr)^{(2k)},\quad \bigl[e^{1/(t+1)}\bigr]^{(2k)}\quad \text{and}\quad \biggl(\frac1{t^2}\biggr)^{(2k)}
$$
are the same and they are opposite to
$$
\bigl(e^{1/t}\bigr)^{(2k-1)},\quad \bigl[e^{1/(t+1)}\bigr]^{(2k-1)}\quad \text{and}\quad \biggl(\frac1{t^2}\biggr)^{(2k-1)}
$$
for $k\ge0$ on $(0,\infty)$. As a result, the sign of the function
$$
\biggl[e^{1/(t+1)}-e^{1/t}+\frac1{t^2}\biggr]^{(2k)}
$$
is opposite to the sign of the function
$$
\biggl[e^{1/(t+1)}-e^{1/t}+\frac1{t^2}\biggr]^{(2k-1)}
$$
for $k\ge0$ on $(0,\infty)$. Therefore, from the inequality~\eqref{e1t-e1(t+1)}, it is obtained inductively that
$$
(-1)^k\biggl[e^{1/(t+1)}-e^{1/t}+\frac1{t^2}\biggr]^{(k)}<0
$$
on $(0,\infty)$ for $k\ge0$. Accordingly, by~\eqref{h(t+1)-h(t)}, it follows that
\begin{align*}
(-1)^k[h(t+1)-h(t)]^{(k)} & =(-1)^kh^{(k)}(t+1)-(-1)^kh^{(k)}(t) \\
&=(-1)^k\biggl[e^{1/(t+1)}-e^{1/t}+\frac1{t^2}\biggr]^{(k)}\\
&<0
\end{align*}
for $k\ge0$ on $(0,\infty)$. Thus,
\begin{multline*}
(-1)^kh^{(k)}(t)>(-1)^kh^{(k)}(t+1)>(-1)^kh^{(k)}(t+2)>\dotsm \\* >(-1)^kh^{(k)}(t+k)>\lim_{k\to\infty}\bigl[(-1)^kh^{(k)}(t+k)\bigr]=0
\end{multline*}
for $k\in\mathbb{N}$. Combining this with~\eqref{h(x)>1} shows that the function $h(t)$ defined by~\eqref{h(x)-dfn-yang-fan} is completely monotonic on $(0,\infty)$. The proof of Lemma~\ref{h(x)-com-mon-lem} is complete.
\end{proof}

\section{Proof of Theorem~\ref{single-di-tri-thm}}

Now we are in a position to prove Theorem~\ref{single-di-tri-thm}.
\par
Letting $\frac1{\ln(1+1/x)}=t$ in~\eqref{single-di-tri-thm-ineq1} and rearranging yields
\begin{equation}\label{single-di-tri-thm-ineq1-let}
 e^{n\psi(t)}\biggl\vert\psi^{(n)}\biggl(\frac1{e^{1/t}-1}\biggr)\biggr\vert>(n-1)!
\end{equation}
for $t\in(0,\infty)$.
\par
Utilizing the left-hand side inequalities in~\eqref{qi-psi-ineq-1} and~\eqref{qi-psi-ineq} gives
\begin{align*}
e^{n\psi(t)}\biggl\vert\psi^{(n)}\biggl(\frac1{e^{1/t}-1}\biggr)\biggr\vert &>e^{n(\ln t-1/t)} \biggl[(n-1)!\bigl(e^{1/t}-1\bigr)^n+\frac{n!}2\bigl(e^{1/t}-1\bigr)^{n+1}\biggr]\\
&=(n-1)!\frac{\bigl(e^{1/t}-1\bigr)^n}{e^{n(\ln(1/t)+1/t)}} \biggl[\frac{n}2\bigl(e^{1/t}-1\bigr)+1\biggr]\\
&=(n-1)!\frac{(e^{u}-1)^n}{u^ne^{nu}} \biggl[\frac{n}2(e^{u}-1)+1\biggr],
\end{align*}
where $u=\frac1t>0$. So, in order to prove~\eqref{single-di-tri-thm-ineq1-let}, it is sufficient to show
\begin{equation}\label{unenu-1}
\frac{(e^{u}-1)^n}{u^ne^{nu}} \biggl[\frac{n}2(e^{u}-1)+1\biggr]\ge1,\quad u>0,
\end{equation}
that is,
\begin{equation}\label{unenu-2}
(e^{u}-1)^n [n(e^{u}-1)+2]\ge 2u^ne^{nu},\quad u>0.
\end{equation}
Let
$$
f_n(u)=(e^{u}-1)^n [n(e^{u}-1)+2]- 2u^ne^{nu}
$$
on $(0,\infty)$. Straightforward differentiation gives
\begin{gather*}
f_1'(u)=2 e^u(e^u-1-u)>0,\\
f_2'(u)=2 e^u \bigl[3 e^{2 u}-2 e^u \bigl(u^2+u+2\bigr)+1\bigr],\\
\biggl[\frac{f_2'(u)}{2 e^u}\biggr]'=2 e^u \bigl(3 e^u-u^2-3 u-3\bigr)>0.
\end{gather*}
Hence, the derivative $f_2'(u)$ is also positive on $(0,\infty)$. Since $f_n(0)=0$ and the functions $f_1(u)$ and $f_2(u)$ are strictly increasing on $(0,\infty)$, it is obtained readily that the functions $f_1(u)$ and $f_2(u)$ are strictly positive on $(0,\infty)$. This shows that the inequalities~\eqref{unenu-1} and~\eqref{unenu-2} are valid on $(0,\infty)$ for $n=1,2$. Therefore, the inequality~\eqref{single-di-tri-thm-ineq1} is valid on $(0,\infty)$ for $n=1,2$.
\par
Letting $\frac1{\ln(1+1/x)}=t$ in~\eqref{single-di-tri-thm-ineq2} leads to
\begin{equation}\label{single-di-tri-thm-ineq2-rew}
\sqrt[n]{\frac{\bigl\vert\psi^{(n)}(1/(e^{1/t}-1))\bigr\vert }{(n-1)!}}\, >\sqrt[k]{\frac{\bigl\vert\psi^{(k)}(t)\bigr\vert}{(k-1)!}}\,
\end{equation}
for $t>0$, where $n\in\mathbb{N}$ and $1\le k\le n-1$. In~\cite[Lemma~1.2]{batir-jmaa-06-05-065}, the inequality
\begin{equation}\label{Lemma-1.2-batir-jmaa-06-05-065}
(-1)^n\psi^{(n+1)}(x)<\frac{n}{\sqrt[n]{(n-1)!}\,}\bigl[(-1)^{n-1}\psi^{(n)}(x)\bigr]^{1+1/n}
\end{equation}
for $x>0$ and $n\in\mathbb{N}$ was turned out, which can be restated more significantly as
\begin{equation}\label{gen-1-1}
\sqrt[n+1]{\frac{\bigl|\psi^{(n+1)}(x)\bigr|}{n!}}\, <\sqrt[n]{\frac{\bigl\vert\psi^{(n)}(x)\bigr\vert}{(n-1)!}},
\end{equation}
an equivalence of the right-hand side inequalities in~\eqref{gen-2} and~\eqref{gen-2-1}. Therefore, it is sufficient to show
\begin{equation}\label{single-2-rew}
\lim_{n\to\infty}\sqrt[n]{\frac{\bigl\vert\psi^{(n)}(1/(e^{1/t}-1))\bigr\vert }{(n-1)!}}\, \ge\psi'(t)
\end{equation}
for $t>0$.
\par
Making use of the double inequality~\eqref{qi-psi-ineq}, it is easy to acquire that
\begin{equation}\label{frack2bigl(e1t-1bigr)+1}
\begin{split}
\bigl(e^{1/t}-1\bigr)\sqrt[k]{\frac{k}{2}\bigl(e^{1/t}-1\bigr)+1}\,
&<\sqrt[k]{\frac{\bigl\vert\psi^{(k)}(1/(e^{1/t}-1))\bigr\vert}{(k-1)!}}\, \\ &<\bigl(e^{1/t}-1\bigr)\sqrt[k]{k\bigl(e^{1/t}-1\bigr)+1}\,
\end{split}
\end{equation}
for $t\in(0,\infty)$ and $k\in\mathbb{N}$. Hence,
\begin{equation}
\lim_{k\to\infty}\sqrt[k]{\frac{\bigl\vert\psi^{(k)}(1/(e^{1/t}-1))\bigr\vert}{(k-1)!}}\, =e^{1/t}-1.
\end{equation}
By virtue of the inequality~\eqref{e-1-t-1}, the inequality~\eqref{single-2-rew} follows, so the inequality~\eqref{single-di-tri-thm-ineq2} is proved.

\section{Remarks}

Finally, we would like to supply several remarks on Theorem~\ref{single-di-tri-thm}.

\begin{rem}
Since
$$
x<\frac1{\ln(1+1/x)}<x+\frac12
$$
and the function $\psi(x)$ and $\bigl\vert\psi^{(n)}(x)\bigr\vert$ for $n\in\mathbb{N}$ are strictly decreasing on $(0,\infty)$, the left-hand side inequalities in~\eqref{batir-thm2.1-ineq} and~\eqref{batir-thm2.1-ineq-rew} for $n=1,2$ and the left-hand side inequalities in~\eqref{gen-2} and~\eqref{gen-2-1} are refined, say nothing of the left-hand side inequality in~\eqref{alzer-forum-thm4.8} for $n=1,2$.
\end{rem}

\begin{rem}
The inequality~\eqref{single-di-tri-thm-ineq1} would be invalid if $n$ is big enough. In other words, the inequality~\eqref{single-di-tri-thm-ineq1} is valid not for all $n\in\mathbb{N}$. Otherwise, the inequality
\begin{equation}\label{invalid-ineq}
\lim_{n\to\infty}\sqrt[n]{\frac{\bigl\vert\psi^{(n)}(x)\bigr\vert }{(n-1)!}}\, =\frac1x \ge e^{-\psi(1/\ln(1+1/x))}
\end{equation}
should be valid on $(0,\infty)$. However, the reversed inequality of~\eqref{invalid-ineq} holds on $(0,\infty)$. This situation motivates us to naturally pose an open problem: What is the largest positive integer $n$ such that the inequality~\eqref{single-di-tri-thm-ineq1} holds on $(0,\infty)$?
\end{rem}

\begin{rem}
Rewriting~\eqref{batir-alzer-ineq} and~\eqref{single-di-tri-thm-ineq1} for $n=1$ leads to
\begin{equation}\label{L-double-ineq}
e^{-\psi(L(x,x+1))}<\psi'(x)<e^{-\psi(L(x,x))}
\end{equation}
for $x>0$, where
\begin{equation}
L(a,b)=
\begin{cases}
\dfrac{b-a}{\ln b-\ln a},& a\ne b\\
a,&a=b
\end{cases}
\end{equation}
stands for the logarithmic mean for positive numbers $a$ and $b$.
Since the logarithmic mean $L(a,b)$ is strictly increasing with respect to both $a>0$ and $b>0$ and the psi function $\psi(x)$ is also strictly increasing on $(0,\infty)$, the inequalities~\eqref{alzer-forum-thm4.8}, \eqref{batir-thm2.1-ineq}, \eqref{batir-thm2.1-ineq-rew}, \eqref{single-di-tri-thm-ineq1} and~\eqref{L-double-ineq} stimulate us to naturally ask the following question: What are the best scalars $p(n)\ge0$ and $q(n)>0$ such that the inequality
\begin{equation}\label{L-double-ineq-p-q}
e^{-\psi(L(x,x+q(n)))} <\sqrt[n]{\frac{\bigl\vert\psi^{(n)}(x)\bigr\vert }{(n-1)!}}\, <e^{-\psi(L(x,x+p(n)))}
\end{equation}
is valid on $(0,\infty)$?
\par
Similarly, the inequalities~\eqref{gen-2}, \eqref{gen-2-1} and~\eqref{single-di-tri-thm-ineq2} motivate us to pose the following open problem: What are the best constants $p(n,k)\ge0$ and $0<q(n,k)\le1$ such that the inequality
\begin{equation}
\sqrt[k]{\frac{\bigl|\psi^{(k)}(L(x,x+q(n,k)))\bigr|}{(k-1)!}}\, <\sqrt[n]{\frac{\bigl\vert\psi^{(n)}(x)\bigr\vert }{(n-1)!}}\, <\sqrt[k]{\frac{\bigl|\psi^{(k)}(L(x,x+p(n,k)))\bigr|}{(k-1)!}}
\end{equation}
holds on $(0,\infty)$ for $1\le k\le n-1$.
\end{rem}

\begin{rem}
Letting $\frac1{\ln(1+1/x)}=t$ in the reversed version of the inequality~\eqref{invalid-ineq} and taking the logarithm yields
\begin{equation}\label{(t)+ln-bigl}
\psi(t)+\ln\bigl(e^{1/t}-1\bigr)<0
\end{equation}
on $(0,\infty)$, which has been established in~\cite[Theorem~2.8]{batir-jmaa-06-05-065}, \cite[Theorem~2]{batir-new-jipam} and~\cite[Theorem~2]{batir-new-rgmia}. The increasing monotonicity of the function in the left-hand side of the inequality~\eqref{(t)+ln-bigl} was presented in \cite{alzer-expo-math-2006, property-psi-ii.tex, property-psi.tex} respectively. The strict concavity and some other generalizations of the function in the inequality~\eqref{(t)+ln-bigl} was discussed in~\cite{property-psi.tex} recently.
\end{rem}

\begin{rem}
The case $n=2$ and $k=1$ in~\eqref{single-di-tri-thm-ineq2} is
\begin{equation}
\psi''(x)+\bigl[\psi'(1/\ln(1+1/x))\bigr]^2<0
\end{equation}
on $(0,\infty)$. This refines the inequality
\begin{equation}\label{psi(x+frac12}
\psi''(x)+\biggl[\psi'\biggl(x+\frac12\biggr)\biggr]^2<0
\end{equation}
on $(0,\infty)$, the special case $n=2$ and $k=1$ of the inequality~\eqref{gen-2}. The inequality~\eqref{psi(x+frac12} was also refined in another direction and generalized in~\cite{AAM-Qi-09-PolyGamma.tex}.
\par
The inequality~\eqref{positivity}, a special case $n=1$ of the inequality~\eqref{Lemma-1.2-batir-jmaa-06-05-065}, has been generalized to the complete monotonicity and many other cases. For more information, please refer to~\cite{AAM-Qi-09-PolyGamma.tex, Sharp-Ineq-Polygamma.tex} and closely-related references therein.
\end{rem}

\begin{rem}
The generalized logarithmic mean $L(p;a,b)$ of order $p\in\mathbb{R}$ for positive numbers $a$ and $b$ with $a\ne b$ is defined in \cite[p.~385]{bullenmean} by
\begin{equation}\label{L(p;a,b)}
L(p;a,b)=
\begin{cases}
\biggl[\dfrac{b^{p+1}-a^{p+1}}{(p+1)(b-a)}\biggr]^{1/p},&p\ne-1,0;\\[1em]
\dfrac{b-a}{\ln b-\ln a},&p=-1;\\[0.8em]
\dfrac1e\biggl(\dfrac{b^b}{a^a}\biggr)^{1/(b-a)},&p=0.
\end{cases}
\end{equation}
It is known~\cite{ql, qx3} that $L(p;a,b)$ is strictly increasing with respect to $p\in\mathbb{R}$. See also~\cite{emv-log-convex-simple.tex, Cheung-Qi-Rev.tex} and closely-related references therein. Furthermore, we can pose the following more general open problem:
What are the best scalars $\lambda(n)$, $\mu(n)$, $p(n)$ and $q(n)$ such that the inequality
\begin{equation}\label{L-double-ineq-p-q-lambda}
e^{-\psi(L(\lambda(n);x,x+q(n)))} <\sqrt[n]{\frac{\bigl\vert\psi^{(n)}(x)\bigr\vert }{(n-1)!}}\, <e^{-\psi(L(\mu(n);x,x+p(n)))}
\end{equation}
is valid on $(0,\infty)$? What are the best constants $\lambda(n,k)$, $\mu(n,k)$, $p(n,k)$ and $q(n,k)$ such that the inequality
\begin{multline}
\sqrt[k]{\frac{\bigl|\psi^{(k)}(L(\lambda(n,k);x,x+q(n,k)))\bigr|}{(k-1)!}}\, <\sqrt[n]{\frac{\bigl\vert\psi^{(n)}(x)\bigr\vert }{(n-1)!}}\, \\* <\sqrt[k]{\frac{\bigl|\psi^{(k)}(L(\mu(n,k);x,x+p(n,k)))\bigr|}{(k-1)!}}
\end{multline}
holds on $(0,\infty)$ for $1\le k\le n-1$.
\end{rem}

\begin{rem}
At last, an alternative proof of the inequality~\eqref{single-di-tri-thm-ineq1} for $n=1$ is provided as follows. Letting $\frac1{\ln(1+1/x)}=t$ in~\eqref{single-di-tri-thm-ineq1} results in
\begin{equation}\label{single-di-tri-thm-ineq1-rew}
\sqrt[n]{\frac{\bigl\vert\psi^{(n)}(1/(e^{1/t}-1))\bigr\vert }{(n-1)!}}\, >e^{-\psi(t)}
\end{equation}
for $t>0$ and $n\in\mathbb{N}$. By the inequality
\begin{equation}\label{bernoulli-ineq-amm}
1+\frac{\alpha x}{1+(1-\alpha)x}\le (1+x)^\alpha\le1+\alpha x
\end{equation}
for $x>-1$ and $0\le\alpha\le1$, see~\cite[p.~128]{kuang-3rd} and~\cite[p.~533]{monthly-92-533}, we have
\begin{equation}
\sqrt[k]{\frac{k}{2}\bigl(e^{1/t}-1\bigr)+1}\, \ge1+\frac{e^{1/t}-1}{2+(k-1)\bigl(e^{1/t}-1\bigr)},\quad t>0.
\end{equation}
Combining this with the left-hand side inequality in~\eqref{frack2bigl(e1t-1bigr)+1} reveals that it suffice to show
\begin{equation}
1+\frac{e^{1/t}-1}{2+(k-1)\bigl(e^{1/t}-1\bigr)}>\frac{e^{-\psi(t)}}{e^{1/t}-1},\quad t>0,
\end{equation}
that is,
\begin{equation}
k<\frac1{{e^{-\psi(t)}}/({e^{1/t}-1})-1}-\frac2{e^{1/t}-1}+1,\quad t>0.
\end{equation}
By the left-hand side inequality in~\eqref{qi-psi-ineq-1}, it follows that
\begin{gather*}
\frac1{{e^{-\psi(t)}}/({e^{1/t}-1})-1}-\frac2{e^{1/t}-1} >\frac1{{e^{-(\ln t-1/t)}}/({e^{1/t}-1})-1}-\frac2{e^{1/t}-1}\\
=\frac1{e^{1/t}/t(e^{1/t}-1)-1}-\frac2{e^{1/t}-1}
=\frac{e^{2 u}-2 e^u u-1}{\bigl(e^u-1\bigr) \bigl(ue^u-e^u+1\bigr)}>0
\end{gather*}
and
$$
\lim_{u\to\infty}\frac{e^{2 u}-2 e^u u-1}{\bigl(e^u-1\bigr) \bigl(ue^u-e^u+1\bigr)}=0,
$$
where $u=\frac1t$. Hence $k\le1$. The inequality~\eqref{single-di-tri-thm-ineq1} for $n=1$ is proved.
\end{rem}

\end{document}